\newtheorem{theorem}{Theorem}[section]
\newtheorem{lemma}{Lemma}[section]
\newtheorem{corollary}{Corollary}[section]
\newtheorem{proposition}{Proposition}[section]
\theoremstyle{definition}
\newtheorem{remark}{Remark}[section]
\newtheorem{definition}{Definition}[section]
\newtheorem{example}{Example}[section]
\newcommand\gen[1]{\langle{#1}\rangle}
\newcommand\N{\mathbb{N}}
\newcommand\Z{\mathbb{Z}}
\newcommand\Q{\mathbb{Q}}
\newcommand\doi[1]{DOI: \href{https://doi.org/#1}{#1}}
\DeclareMathOperator{\Nil}{Nil}
\DeclareMathOperator{\Jac}{Jac}
\DeclareMathOperator{\Kdim}{\mathsf{Kdim}}
\begin{document}

\title{A constructive proof of the general Nullstellensatz for Jacobson rings}
\author{Ryota Kuroki}
\email{kuroki-ryota128@g.ecc.u-tokyo.ac.jp}
\address{Graduate School of Mathematical Sciences, The University of Tokyo, 3-8-1 Komaba, Meguro-ku, Tokyo, 153-8914, Japan}

\begin{abstract}
We give a constructive proof of the general Nullstellensatz: a univariate polynomial ring over a commutative Jacobson ring is Jacobson. This theorem implies that every finitely generated algebra over a zero-dimensional ring or the ring of integers is Jacobson, which has been an open problem in constructive algebra. We also prove a variant of the general Nullstellensatz for finitely Jacobson rings.
\end{abstract}

\keywords{Constructive algebra, Jacobson rings}
\subjclass[2020]{Primary: 13F20; Secondary: 03F65}

\maketitle

\section{Introduction}
In this paper, all rings are considered to be commutative with identity.

Constructive algebra is the study of algebra without using non-constructive principles, such as Zorn's lemma or the law of excluded middle.
See \cite{MRR88,LQ15,Yen15,CL24} for recent developments in constructive algebra.
It is closely related to computer algebra since constructive proofs have computational content, which can be extracted using proof assistants, such as Agda, Coq, and Lean.

Despite recent developments in constructive algebra, little work has been done on the constructive theory of Jacobson rings.
In classical mathematics, a Jacobson ring is a ring $A$ such that every prime ideal of $A$ is an intersection of maximal ideals.
One of the most important theorems about Jacobson rings is the general Nullstellensatz: a univariate polynomial ring over a Jacobson ring is Jacobson. This theorem has been proved independently by Goldman \cite[Theorem 3]{Gol51} and Krull \cite[Satz 1]{Kru51}.

Recently, Wessel \cite[Section 2.4.1]{Wes18} has proposed the following constructive definition of a Jacobson ring: a ring $A$ is called Jacobson if $\Nil I=\Jac I$ for all ideals $I$ of $A$, where $\Nil I$ and $\Jac I$ denote the nilradical and Jacobson radical of $I$, respectively.
In this paper, based on this proposal by Wessel and the idea of the classical proof by Emerton \cite[Theorem 8]{Eme}, we give a constructive proof of the general Nullstellensatz (\cref{general-nullstellensatz}).
Our constructive proof provides a solution to two questions \cite{Wer17,Arr21} on MathOverflow and two open problems \cite[{}1.1, 1.2]{Lom23} in Lombardi's list.

We use the entailment relation of prime ideal and the entailment relation of maximal ideal \cite{SW21} to obtain a constructive proof, although familiarity with entailment relations is not necessary to understand our proof.
A notion similar to the entailment relation has been considered by Lorenzen \cite{Lor51}, and Scott \cite{Sco71,Sco74} has later introduced the entailment relation.
The use of entailment relations in algebra dates back to Cederquist and Coquand \cite{CC00} and Coquand and Persson \cite{CP01}. Their work is closely related to the dynamical method by Coste, Lombardi, and Roy \cite{CLR01}, which is a generic method to turn a classical proof into a constructive proof.
Classical arguments involving prime ideals and maximal ideals often use Zorn's lemma.
The dynamical method simulates classical proofs in a simple deductive system, such as entailment relations, and extracts a constructive argument without prime ideals or maximal ideals.

The most important step in our constructive proof of the general Nullstellensatz is the generalization (\cref{Emerton}) of Emerton's key lemma \cite[Lemma 6]{Eme}.
Emerton's lemma is a lemma about integral domains, and we need some non-constructive principle to obtain the general Nullstellensatz from it.
Our generalized lemma does not involve integral domains and is more useful in constructive mathematics.

The definition of a Jacobson ring used in this paper contains a quantification over all ideals of a ring.
This definition works in some predicative foundations of constructive mathematics, such as Martin-Löf type theory with universes using setoids, although we have to pay attention to the universe levels.
On the other hand, this definition does not work in other predicative foundations that cannot handle power sets at all, such as Martin-Löf type theory without universes.
Such foundations do not accept constructing a proposition by quantifying over all subsets of a set.
A definition of a Jacobson ring that does not use this problematic quantification is still not known.

In the last section, we prove a variant of the general Nullstellensatz for finitely Jacobson rings (\cref{fin-general-nullstellensatz}).
A ring $A$ is called finitely Jacobson if $\Nil I=\Jac I$ for all finitely generated ideals $I$ of $A$.
Madden, Delzell, and Ighedo \cite[p.~9]{MDI22} have introduced the notion of finitely Jacobson rings based on the notion of conjunctive lattices \cite[Definition 1]{Sim78}, \cite{DIM21}.
Lombardi \cite[Section 1]{Lom23} has also introduced the same notion under the name of weakly Jacobson rings.
The name comes from the notion of weakly Jacobson lattices \cite[Définitions 1.2]{CLQ06}, which is another name for conjunctive lattices.
Unlike Jacobson rings, finitely Jacobson rings can be defined without the problematic quantification mentioned above.
\section{Jacobson rings}
In this section, we present the constructive definition of Jacobson rings and provide some examples.
We first recall some basic definitions.
\begin{definition}
    Let ${\gen U}_A$ denote the ideal of a ring $A$ generated by a subset $U\subseteq A$. We define two ideals $\Nil_A U$, $\Jac_A U$ of $A$ as follows:
    \begin{gather*}
    \Nil_A U:=\{a\in A:\exists n\ge0.\ a^n\in\gen U\},\\\Jac_AU:=\{a\in A:\forall b\in A.\ 1\in\gen{U,1-ab}\}.
    \end{gather*}
    Here $\gen{U,1-ab}$ means $\gen{U\cup\{1-ab\}}$. When the context is clear, we write $\gen U$, $\Nil U$, and $\Jac U$ for ${\gen U}_A$, $\Nil_AU$, and $\Jac_AU$, respectively.
    Note that $\Nil U=\Nil{\gen{U}}$ and $\Jac U=\Jac{\gen{U}}$ hold for all $U\subseteq A$.
\end{definition}
The ideals $\Nil_A0$ and $\Jac_A0$ are called the \emph{nilradical} and the \emph{Jacobson radical} of $A$, respectively.
Classically, it is well known that the ideal $\Nil U$ (resp. $\Jac U$) is equal to the intersection of all prime (resp. maximal) ideals containing $U$.

The above definition does not use prime ideals or maximal ideals. Therefore, it is reasonable to use the following definition of a Jacobson ring in a constructive setting.
\begin{definition}[{\cite[Section 2.4.1]{Wes18}}]
    We call a ring $A$ \emph{Jacobson} if every ideal $I$ of $A$ satisfies $\Jac I\subseteq\Nil I$.
\end{definition}
In classical mathematics, the above definition is equivalent to the ordinary one.
Note that every subset $U$ of a ring $A$ satisfies $\Nil U\subseteq\Jac U$.
If we constructively prove that a ring $A$ is Jacobson, then we can extract an algorithm such that
\begin{itemize}
    \item its input is an ideal $I$ of $A$, an element $a$ of $A$, and a function $f:A\to I\times A$ such that for any $b\in A$, if $(i,c)=f(b)$, then $1=i+(1-ab)c$, and
    \item its output is a natural number $n\ge0$ such that $a^n\in I$.
\end{itemize}
The following proposition easily follows from the definition.
\begin{proposition}\label{radical-projection}
    Let $I$ be an ideal of a ring $A$. Let $\pi:A\to A/I$ be the canonical projection. For every subset $U$ of $A$, the following equalities hold:
    \[
    \pi^{-1}(\Nil_{A/I}{\pi(U)})=\Nil_A(I\cup U),\quad \pi^{-1}(\Jac_{A/I}{\pi(U)})=\Jac_A(I\cup U).
    \]
\end{proposition}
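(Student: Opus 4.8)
The plan is to deduce both equalities from a single \emph{transfer lemma}: for every subset $V\subseteq A$ one has $\pi^{-1}(\gen{\overline V}_{A/I})=\gen{I\cup V}_A$. The inclusion $\supseteq$ holds because $\pi^{-1}$ of an ideal is an ideal containing both $I=\ker\pi$ and $V$ (as $V\subseteq\pi^{-1}(\overline V)\subseteq\pi^{-1}(\gen{\overline V}_{A/I})$). The inclusion $\subseteq$ holds because if $\pi(a)=\sum_i\overline{r_i}\,\overline{v_i}$ with finitely many $r_i\in A$ and $v_i\in V$, then $a-\sum_i r_iv_i\in\ker\pi=I$, so $a\in I+\gen V_A\subseteq\gen{I\cup V}_A$. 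I would also record two trivialities: $1_{A/I}\in J\iff 1_A\in\pi^{-1}(J)$ for any ideal $J$ of $A/I$, and $\pi$ being a ring homomorphism gives $\pi(a^n)=\pi(a)^n$ and $\pi(1-ab)=1-\overline a\,\overline b$.

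For the nilradical, unwinding the definition gives, for $a\in A$,
\[
a\in\pi^{-1}(\Nil_{A/I}\overline U)\iff \exists n\ge0.\ \pi(a)^n\in\gen{\overline U}_{A/I}\iff\exists n\ge0.\ \pi(a^n)\in\gen{\overline U}_{A/I}.
\]
By the transfer lemma with $V=U$ this last condition is equivalent to $\exists n\ge0.\ a^n\in\gen{I\cup U}_A$, i.e.\ to $a\in\Nil_A(I\cup U)$.

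For the Jacobson radical, I would first rewrite, for fixed $a,b\in A$,
\[
\gen{I\cup U,1-ab}_A=\gen{I\cup(U\cup\{1-ab\})}_A=\pi^{-1}\bigl(\gen{\overline U,1-\overline a\,\overline b}_{A/I}\bigr),
\]
the last step being the transfer lemma with $V=U\cup\{1-ab\}$ together with $\pi(1-ab)=1-\overline a\,\overline b$. Combining this with $1_{A/I}\in J\iff 1_A\in\pi^{-1}(J)$, the condition $a\in\Jac_A(I\cup U)$, namely $\forall b\in A.\ 1\in\gen{I\cup U,1-ab}_A$, becomes $\forall b\in A.\ 1_{A/I}\in\gen{\overline U,1-\overline a\,\overline b}_{A/I}$. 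Since $\pi$ is surjective, the quantifier $\forall b\in A$ may be replaced by $\forall\overline b\in A/I$, which is exactly $\pi(a)\in\Jac_{A/I}\overline U$, i.e.\ $a\in\pi^{-1}(\Jac_{A/I}\overline U)$.

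I do not expect a serious obstacle: the proof is a careful unwinding of the two definitions through the surjection $\pi$, and each ingredient — the transfer lemma, the replacement of the quantifier over $A/I$ by one over $A$ via surjectivity, and $\pi(a^n)=\pi(a)^n$ — is constructively unproblematic. The only point that wants attention is to organize the $\Jac$ computation so that the universal quantifier is discharged through surjectivity of $\pi$ rather than through a choice of representatives, and to keep straight which ideal of $A$ corresponds under $\pi^{-1}$ to $\gen{\overline U,1-\overline a\,\overline b}_{A/I}$.
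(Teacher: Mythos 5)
Your proposal is correct. The paper states this proposition without proof, treating it as a routine verification, and your argument — the transfer identity $\pi^{-1}(\gen{\overline V}_{A/I})=\gen{I\cup V}_A$, the unwinding of the definitions of $\Nil$ and $\Jac$ through $\pi$, and the discharge of the quantifier $\forall b$ via surjectivity of $\pi$ rather than a choice of representatives — is exactly the intended (and constructively unproblematic) argument.
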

Two corollaries follow from the above proposition. They are useful for constructing some examples of Jacobson rings.
\begin{corollary}\label{quotient-Jacobson}
    If $I$ is an ideal of a Jacobson ring $A$, then $A/I$ is Jacobson.
\end{corollary}
\begin{corollary}\label{Jacobson-iff-Jac-sub-Nil-for-all-quotients}
    A ring $A$ is Jacobson if and only if $\Jac_{A/I}0\subseteq\Nil_{A/I}0$ holds for all ideals $I$ of $A$.
\end{corollary}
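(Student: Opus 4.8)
The plan is to deduce this directly from \cref{radical-projection} and \cref{quotient-of-Jacobson-is-Jacobson}, so that the whole argument is essentially formal.

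For the forward implication, suppose $A$ is Jacobson and let $I$ be an ideal of $A$. By \cref{quotient-of-Jacobson-is-Jacobson} the quotient $A/I$ is again Jacobson, and applying the definition of a Jacobson ring to the zero ideal of $A/I$ gives precisely $\Jac_{A/I}0\subseteq\Nil_{A/I}0$.

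For the converse, assume $\Jac_{A/I}0\subseteq\Nil_{A/I}0$ for every ideal $I$ of $A$, and fix an arbitrary ideal $I\subseteq A$ with canonical projection $\pi\colon A\to A/I$. I would invoke \cref{radical-projection} with the empty subset $U=\emptyset$ (equivalently $U=\{0\}$), so that $\overline U=\emptyset$ and $\gen U=0$ in both rings; this yields $\pi^{-1}(\Nil_{A/I}0)=\Nil_A I$ and $\pi^{-1}(\Jac_{A/I}0)=\Jac_A I$, using $I\cup\emptyset=I$ and the fact that $I$ is already an ideal. Taking preimages under $\pi$ in the hypothesis $\Jac_{A/I}0\subseteq\Nil_{A/I}0$ then gives $\Jac_A I\subseteq\Nil_A I$, and since $I$ was arbitrary, $A$ is Jacobson.

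There is no genuine obstacle here: both directions are immediate once \cref{radical-projection} is available, and the only point needing a moment's care is the bookkeeping that \cref{radical-projection} may legitimately be applied with $U=\emptyset$, together with the identities $\Nil_A(I\cup\emptyset)=\Nil_A I$ and $\Jac_A(I\cup\emptyset)=\Jac_A I$ that let us replace the generic subset $U$ in that proposition by nothing at all.
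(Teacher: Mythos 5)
Your proof is correct and is essentially the argument the paper intends: the corollary is stated without proof precisely because it follows, as you show, from \cref{quotient-of-Jacobson-is-Jacobson} for the forward direction and from \cref{radical-projection} with $U=\emptyset$ (so that $\Jac_A I=\pi^{-1}(\Jac_{A/I}0)$ and $\Nil_A I=\pi^{-1}(\Nil_{A/I}0)$) for the converse. Your bookkeeping about $I\cup\emptyset=I$ and $\gen\emptyset=0$ is exactly the right point of care, and nothing further is needed.
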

We next prove a fundamental lemma that can be used to avoid the use of prime ideals and maximal ideals.
\begin{lemma}\label{radical-properties}
    Let $A$ be a ring, $U$ be a subset of $A$, and $x,y\in A$. The following statements hold:
    \begin{enumerate}
        \item If $xy\in\Nil U$ and $x\in\Nil{(U,y)}$, then $x\in\Nil U$. Here $\Nil{(U,y)}$ means $\Nil{(U\cup\{y\})}$.
        \item If $x\in\Jac(U,1-yz)$ for all $z\in A$, then $xy\in\Jac U$.
    \end{enumerate}
\end{lemma}
\begin{proof}
    \begin{enumerate}
        \item There exists $n\ge0$ such that $x^n\in\gen U+\gen{y}$. Hence $x^{n+1}\in\gen U+\gen{xy}\subseteq\Nil U$.
        \item For all $w\in A$, we have $1\in\gen{U,1-y(xw),1-x(yw)}=\gen{U,1-(xy)w}$. Hence $xy\in\Jac U$.\qedhere
    \end{enumerate}
\end{proof}
\begin{remark}\label{dynamical}
    This remark is not needed to understand the constructive proof of our main theorem, but it explains the importance of \cref{radical-properties}.
    
    In terms of the entailment relation $\vdash_{\mathrm{p}}$ and the geometric entailment relation $\vdash_{\mathrm{m}}$, which are defined in \cite{SW21}, item 1 of the above lemma corresponds to the fact that $U\vdash_{\mathrm{p}} x,y$ and $U,y\vdash_{\mathrm{p}} x$ together imply that $U\vdash_{\mathrm{p}} x$. Item 2 corresponds to the fact that if $U,1-yz\vdash_{\mathrm{m}}x$ holds for all $z\in A$, then $U\vdash_{\mathrm{m}} x,y$. They are related to the cut rule of $\vdash_{\mathrm{p}}$ and the infinitary cut rule of $\vdash_{\mathrm{m}}$, respectively.

    In classical proofs, we can use results about integral domains to prove something about arbitrary rings.
    We do this by taking the quotient by a prime ideal, and this method does not work constructively, because we do not have enough prime ideals without non-constructive principles such as Zorn's lemma.
    Thus, in constructive algebra, we use the entailment relation $\vdash_{\mathrm{p}}$ to simulate an argument about integral domains, and then extract some results about arbitrary rings.
    We similarly use $\vdash_{\mathrm{m}}$ to extract some general result from an argument about fields.
\end{remark}
\begin{corollary}\label{idempotence}
    Let $A$ be a ring and $U\subseteq A$. Then $\Jac(\Jac U)\subseteq\Jac U$ hold.
\end{corollary}
\begin{proof}
    Let $a\in\Jac(\Jac U)$.
    Then we have $1\in\gen{\Jac U,1-ab}\subseteq\Jac (U,1-ab)$ for every $b\in A$.
    Thus $a\in\Jac U$ by \cref{radical-properties}-(2) with $(A,U,x,y)=(A,U,1,a)$.
\end{proof}
Before we provide some examples of Jacobson rings, we review the basic constructive theory of Krull dimension.
Lombardi \cite[Définition 5.1]{Lom02} has introduced the following constructively acceptable definition of Krull dimension.
\begin{definition}
    Let $n\ge-1$. A ring $A$ is of \emph{Krull dimension at most $n$} if for every $x_0,\ldots,x_n\in A$, there exists $e_0,\ldots,e_n\ge0$ such that
    \[x_0^{e_0}\cdots x_n^{e_n}\in\gen{x_0^{e_0+1},x_0^{e_0}x_1^{e_1+1},\ldots,x_0^{e_0}\cdots x_{n-1}^{e_{n-1}}x_n^{e_n+1}}.\]
    Let $\Kdim A\le n$ denote the statement that $A$ is of Krull dimension at most $n$.
    A ring $A$ is called \emph{$n$-dimensional} if $\Kdim A\le n$.
\end{definition}
\begin{example}
    Every discrete field is zero-dimensional, where a discrete field means a ring $K$ such that every element of $K$ is null or invertible.
    We also have $\Kdim\Z\le1$ \cite[Examples below Lemma XIII-2.4]{LQ15}, \cite[Examples 86]{Yen15}.
\end{example}
\begin{proposition}[{\cite[Proposition XIII-3.1, Lemma IX-1.2]{LQ15}}]\label{Kdim-proposition}
    Let $A$ be a ring with $\Kdim A\le n$. The following statements hold:
    \begin{enumerate}
        \item If $I$ is an ideal of $A$, then $\Kdim A/I\le n$.
        \item If $n\ge0$ and $I$ is an ideal of $A$ containing a regular element $x$, then $\Kdim A/I\le n-1$.
        \item If $n=0$, then $\Jac_A0\subseteq\Nil_A0$.
    \end{enumerate}
\end{proposition}
Now, we can show that every zero-dimensional ring is Jacobson, which is essentially contained in \cite[Lemma IX-1.2]{LQ15}.
\begin{example}\label{zero-dim-implies-Jacobson}
    Let $A$ be a zero-dimensional ring.
    By \cref{Kdim-proposition}, we have $\Jac_{A/I}0\subseteq\Nil_{A/I}0$ for all ideals $I$ of $A$.
    Thus, $A$ is Jacobson by \cref{Jacobson-iff-Jac-sub-Nil-for-all-quotients}. In particular, every discrete field is Jacobson.
\end{example}
We next prove that the ring of integers $\Z$ is Jacobson.
Lombardi \cite{Lom24} has pointed out that the following lemma generalizes the essential part of our original proof that $\Z$ is Jacobson.
\begin{lemma}\label{Kdim1-Jacobson-lemma}
    Let $A$ be a $1$-dimensional ring. If an ideal $I$ of $A$ contains some regular element, then $\Jac I\subseteq\Nil I$.
\end{lemma}
\begin{proof}
    We have $\Kdim A/I\le0$ by \cref{Kdim-proposition}. Hence we have $\Jac_{A/I}0\subseteq\Nil_{A/I}0$ by \cref{zero-dim-implies-Jacobson}. Hence, the assertion follows from \cref{radical-projection}.
\end{proof}
The following proposition gives a class of Jacobson rings that contains $\Z$.
\begin{proposition}\label{Kdim1-Jacobson}
    If $A$ is a ring satisfying all of the following conditions, then $A$ is Jacobson:
    \begin{enumerate}
        \item Every element of $A$ is nilpotent or regular.
        \item The Krull dimension of $A$ is at most 1.
        \item For all ideals $I$ of $A$, if $\Jac I$ contains some regular element, then $I$ also contains some regular element.
    \end{enumerate}
    In particular, $\Z$ is Jacobson.
\end{proposition}
\begin{proof}
    Let $a\in\Jac I$.
    \begin{enumerate}
        \item If $a$ is nilpotent, then $a\in\Nil I$.
        \item If $a$ is regular, then $I$ contains some regular element. Hence $a\in\Nil I$ by \cref{Kdim1-Jacobson-lemma}.\qedhere
    \end{enumerate}
\end{proof}
Assuming the law of excluded middle, we can replace the third condition of the above proposition with $\Jac0\subseteq\Nil0$ by the following proposition:
\begin{proposition}\label{simplify-Kdim1-Jacobson}
    Let $A$ be a ring satisfying the following two conditions:
    \begin{enumerate}
        \item Every element of $A$ is nilpotent or regular.
        \item $\Jac0\subseteq\Nil0$.
    \end{enumerate}
    Then, for all ideals $I$ of $A$, if $\Jac I$ contains some regular element, then the double negation of ``$I$ contains a regular element'' holds.
\end{proposition}
\begin{proof}
    Let $I$ be an ideal of $A$ such that $\Jac I$ contains some regular element.
    \begin{itemize}
        \item Assume that $I$ does not contain a regular element. Then $I\subseteq\Nil 0\subseteq\Jac0$.
        Hence $\Jac I\subseteq\Jac(\Jac 0)\subseteq\Jac 0\subseteq\Nil0$ by \cref{idempotence}.
        Since $\Jac I$ contains some regular element, $A$ has a regular and nilpotent element. Hence $0\in I$ is regular, and this contradicts the assumption that $I$ does not contain a regular element.
    \end{itemize}
    Thus, the double negation of ``$I$ contains a regular element'' holds.
\end{proof}
In \cref{Z-is-Jacobson}, we use the following lemma to present the computational content of $\Z$ being a Jacobson ring.
The proof of the lemma is similar to the proof of $1$-dimensionality of $\Z$ in \cite[Examples 86]{Yen15}.
\begin{lemma}\label{Z-is-Jacobson-lemma}
    Let $x\in\Z-\{0\}$ and $a\in\Z$. Then, there exist $d,e\in\Z$ such that $x=de$, $a\in\Nil d$, and $1\in\gen{a,e}$.
\end{lemma}
\begin{proof}
    Computing successively \[d_1=\gcd(x,a),\,d_2=\gcd(\frac{x}{d_1},a),\,d_3=\gcd(\frac{x}{d_1d_2},a),\ldots,
    \]we obtain a finite sequence $(d_1,\dots,d_n)$ of positive integers such that $d_i\mid a$ and $d_n=1$.
    By letting $d:=d_1\cdots d_{n-1}$ and $e:=x/d$, we have $x=de$, $a\in\Nil d$, and $1\in\gen{a,e}$.
\end{proof}
\begin{remark}\label{Z-is-Jacobson}
    The ring of integers $\Z$ is Jacobson by \cref{Kdim1-Jacobson}. We can unfold the constructive proof and reveal the computational content as follows.
    Let $I$ be an ideal of $\Z$ and $a\in\Jac I$.
    \begin{enumerate}
        \item If $a=0$, then $a\in\Nil I$.
        \item If $a\ge1$, then there exists $c_{-1}\in\Z$ such that $1-(1+a)c_{-1}\in I$ by $a\in\Jac I$. Since $1-(1+a)c_{-1}\ne0$, we deduce from the \cref{Z-is-Jacobson-lemma} that there exist $d,e\in\Z$ such that $1-(1+a)c_{-1}=de$, $a\in\Nil d$, and $1\in\gen{a,e}$ all hold. By $1\in\gen{a,e}$, there exists $b\in\Z$ such that $1-ab\in\gen e$. By $a\in\Jac I$, there exists $c_b\in\Z$ such that $1-(1-ab)c_b\in I$. Since $d\in\gen{1-(1+a)c_{-1},1-(1-ab)c_b}\subseteq I$, we have $a\in\Nil d\subseteq\Nil I$.
        \item If $a\le-1$, then $a\in\Nil I$ by a similar argument.
    \end{enumerate}
    This solves a problem that Werner has mentioned in his comment to his question \cite{Wer17} on MathOverflow.
    Note that in the above argument, the ideal $I\subseteq\Z$ is not assumed to be finitely generated or detachable, where a detachable ideal of a ring $A$ means an ideal $I\subseteq A$ such that $x\in I$ or $x\notin I$ holds for all $x\in A$.
\end{remark}
\section{The general Nullstellensatz}
In this section, we provide a constructive proof of the general Nullstellensatz.
We first recall three basic constructive results.
\begin{proposition}[{\cite[Corollary VI-1.3]{MRR88}}]\label{integrality-monogenic-algebra}
    Let $B$ be an $A$-algebra. If $b\in B$ is integral over $A$, then the algebra $A[b]\subseteq B$ is integral over $A$.
\end{proposition}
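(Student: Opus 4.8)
The plan is to deduce this from the more basic fact that any $A$-algebra which is finitely generated \emph{as an $A$-module} has all of its elements integral over $A$, applied to the algebra $A[b]$.

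First I would fix a monic polynomial $f(X) = X^{n} - a_{n-1}X^{n-1} - \dots - a_0 \in A[X]$ with $f(b) = 0$ and check that $M := A + Ab + \dots + Ab^{n-1}$ equals $A[b]$. The inclusion $M \subseteq A[b]$ is immediate; for the reverse it suffices that $b^{k} \in M$ for every $k$, which follows by induction on $k$ from $b^{n} = a_{n-1}b^{n-1} + \dots + a_0$ (equivalently, by polynomial division by $f$). This step is entirely effective and is the only place the hypothesis enters.

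Next comes the determinant trick. Given $\beta \in A[b]$, use $M = A[b]$ to write $\beta\, b^{i} = \sum_{j=0}^{n-1} c_{ij}\, b^{j}$ with $c_{ij} \in A$, and set $N := \beta\, \mathrm{Id}_n - (c_{ij})_{i,j} \in \mathrm{M}_n(A[b])$. Then $N\mathbf{v} = 0$ for the column vector $\mathbf{v} = (1, b, \dots, b^{n-1})^{\mathsf T}$, so multiplying on the left by $\mathrm{adj}(N)$ and using $\mathrm{adj}(N)\,N = \det(N)\,\mathrm{Id}_n$ gives $\det(N)\, b^{i} = 0$ for all $i$; the case $i = 0$ yields $\det(N) = 0$. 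Expanding the determinant shows $\det(N) = \beta^{n} + \sum_{k<n} d_k\,\beta^{k}$ for suitable $d_k \in A$, which is a monic relation witnessing that $\beta$ is integral over $A$.

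I do not expect a genuine obstacle here. The adjugate identity $\mathrm{adj}(N)\,N = \det(N)\,\mathrm{Id}_n$ is a universal polynomial identity over commutative rings, hence constructively unproblematic, and once $f$ and $\beta$ are given the whole argument is quantifier-free and algorithmic. The only care needed is bookkeeping: reading ``integral over $A$'' as ``a root of some monic polynomial in $A[X]$'' and noting that the coefficients $d_k$ arising from the cofactor expansion of $N$ lie in $A$ because the $c_{ij}$ do.
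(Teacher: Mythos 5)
Your proof is correct. Note that the paper does not actually prove \cref{integrality-monogenic-algebra}: it is quoted as a known constructive result from \cite[Corollary VI-1.3]{MRR}, so there is no in-paper argument to compare against; your write-up is essentially the standard constructive proof behind that reference. The two steps are both sound: reduction modulo the monic relation shows $A[b]=A+Ab+\cdots+Ab^{n-1}$ effectively, and the adjugate identity $\mathrm{adj}(N)N=\det(N)\,\mathrm{Id}_n$ is a polynomial identity valid over any commutative ring, so the Cayley--Hamilton/determinant trick is constructively unobjectionable; your use of the generator $1=b^0$ to conclude $\det(N)=0$ correctly replaces the usual faithfulness hypothesis. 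The only degenerate case is $n=0$ (a monic relation of degree $0$ forces $1=0$ in $B$, and the empty-matrix determinant is $1$, so the trick as written does not apply); this is harmless, since then $B$ is trivial and every element is a root of $X$, or one can simply replace $f$ by $Xf$ to assume $n\ge1$.
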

\begin{proposition}[{\cite[Theorem IX-1.7]{LQ15}}]\label{invertibility-integral-extension}
    If $B$ is an integral extension of a ring $A$, then $B^\times\cap A\subseteq A^\times$.
\end{proposition}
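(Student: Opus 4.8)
The plan is to extract an explicit formula for $a^{-1}$ from the monic relation witnessing integrality. Recall that ``$B$ is an integral extension of $A$'' means $A\subseteq B$ and every element of $B$ is a root of a monic polynomial over $A$; so it suffices to treat an arbitrary $a\in A$ whose inverse $a^{-1}$ exists in $B$. By integrality there are some $n\ge1$ and coefficients $c_0,\dots,c_{n-1}\in A$ with
\[
(a^{-1})^n+c_{n-1}(a^{-1})^{n-1}+\dots+c_1a^{-1}+c_0=0.
\]
Multiplying this identity by $a^{n-1}$ and repeatedly using $a\cdot a^{-1}=1$ turns every term of degree $\ge2$ in $a^{-1}$ into a polynomial in $a$ with coefficients in $A$, leaving
\[
a^{-1}=-(c_{n-1}+c_{n-2}a+\dots+c_1a^{n-2}+c_0a^{n-1}).
\]
The right-hand side lies in $A$, hence $a^{-1}\in A$, i.e.\ $a\in A^\times$. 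This gives $B^\times\cap A\subseteq A^\times$.

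I expect no genuine obstacle here: the argument is finite, case-free, and uses nothing about $B$ beyond the single equation $a\,a^{-1}=1$ together with the integrality data $(n,c_0,\dots,c_{n-1})$. The only point that needs to be stated carefully is the convention just mentioned — that the relevant element $a^{-1}$ is itself integral over $A$ — after which the displayed manipulation is the whole proof. In particular the reasoning is manifestly constructive: from a witness of integrality it computes the inverse of $a$ in $A$ outright, which is exactly the form in which this proposition will be invoked later (typically in tandem with \cref{integrality-monogenic-algebra}) when analysing units and Jacobson radicals in polynomial extensions.
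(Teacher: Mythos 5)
Your proof is correct: the paper itself gives no argument for this proposition, importing it by citation from \cite[Theorem IX-1.7]{CACM}, and your computation (multiplying the monic relation for $a^{-1}$ by $a^{n-1}$ to exhibit $a^{-1}$ as an explicit polynomial in $a$ with coefficients $c_0,\dots,c_{n-1}\in A$) is exactly the standard constructive proof of that cited result, so it matches the intended argument in both content and constructive character. The only cosmetic point is the degenerate case $n=0$, where the ``monic relation'' reads $1=0$ and the ring is trivial, so the conclusion holds vacuously; with the usual convention $n\ge1$ your argument needs no change.
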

\begin{proposition}[{\cite[Lemma II-2.6]{LQ15}}]\label{invertible-polynomial}
    For all rings $A$, $A[X]^\times\subseteq A^\times+(\Nil_A0)[X]$.
\end{proposition}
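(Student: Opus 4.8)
The plan is to run the classical degree argument and check that it is already constructive, since every existential it produces comes with an explicit witness. First I would take $f=\sum_{i=0}^{n}a_iX^i\in A[X]^\times$ (without assuming $a_n\ne0$) and fix $g=\sum_{j=0}^{m}b_jX^j$ with $fg=1$. Comparing constant coefficients gives $a_0b_0=1$, so $a_0\in A^\times$; this already disposes of the case $n=0$, and in general I would argue by induction on the formal degree $n$, so I may assume $n\ge1$.

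The core of the proof is a finite induction on $k\in\{0,\dots,m\}$ establishing $a_n^{k+1}b_{m-k}=0$. For $k=0$ this is the vanishing of the coefficient of $X^{n+m}$ in $fg=1$ (which is $0$ because $n+m\ge1$). For $1\le k\le m$ the coefficient of $X^{n+m-k}$ in $fg$ vanishes, i.e.\ $a_nb_{m-k}+\sum_{j=1}^{k}a_{n-j}b_{m-k+j}=0$ (coefficients outside the displayed ranges being $0$); multiplying through by $a_n^{k}$ and using the induction hypothesis $a_n^{(k-j)+1}b_{m-(k-j)}=0$ for $1\le j\le k$ to annihilate every term with $j\ge1$ (note $a_n^{k}=a_n^{\,j-1}\cdot a_n^{(k-j)+1}$ since $j-1\ge0$), one is left with $a_n^{k+1}b_{m-k}=0$. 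Taking $k=m$ gives $a_n^{m+1}b_0=0$, and multiplying by $a_0$ and using $a_0b_0=1$ yields $a_n^{m+1}=0$, so $a_n\in\Nil_A0$.

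To finish, I would observe that $a_nX^n\in\Nil_{A[X]}0$ and that a unit minus a nilpotent is again a unit: if $u\in A[X]^\times$ and $z\in\Nil_{A[X]}0$ then $u-z=u(1-u^{-1}z)$, and $1-u^{-1}z$ is invertible via the terminating geometric series $\sum_{i<N}(u^{-1}z)^i$ where $(u^{-1}z)^N=0$. Hence $f-a_nX^n=\sum_{i=0}^{n-1}a_iX^i\in A[X]^\times$ is a unit of smaller formal degree, and the induction hypothesis gives $a_0\in A^\times$ and $a_1,\dots,a_{n-1}\in\Nil_A0$; together with $a_n\in\Nil_A0$ this shows $f\in A^\times+(\Nil_A0)[X]$.

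I do not expect a genuine obstacle: the classical argument needs no repair, since invertibility and nilpotency are witnessed explicitly throughout and the induction on $n$ is over a natural number. The one point demanding care is the index bookkeeping in the middle induction — verifying that the power $a_n^{k}$ is always large enough to absorb each exponent $(k-j)+1$ handed back by the induction hypothesis (which holds precisely because $j\ge1$), and that the coefficients $b_{m-k+j}$ occurring there genuinely lie among $b_0,\dots,b_m$.
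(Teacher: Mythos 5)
Your proof is correct and constructive: working with the formal degree avoids any appeal to decidability of vanishing of $a_n$, the finite induction giving $a_n^{k+1}b_{m-k}=0$ is carried out with explicit witnesses, and the reduction via ``unit minus nilpotent is a unit'' closes the outer induction. The paper itself gives no proof here but simply cites \cite[Lemma II-2.6]{CACM}, and your argument is essentially the standard constructive proof found there, so you have in effect reproduced the intended justification.
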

\begin{corollary}[Snapper's theorem {\cite[Corollary 8.1]{Sna50}}]\label{polynomial-jac-nil}
    For all rings $A$, $\Jac_{A[X]}0\subseteq\Nil_{A[X]}0$.
\end{corollary}
\begin{proof}
    Let $f\in\Jac_{A[X]}0$. Then we have $1-Xf\in A[X]^{\times}$. Hence $f\in(\Nil_A0)[X]$ by \cref{invertible-polynomial}, and thus $f\in\Nil_{A[X]}0$.
\end{proof}
For $a\in A$, let $A_a$ denote the localization $A[1/a]$. Quitté \cite{Qui24} has pointed out that the following lemma is essentially contained in our original proof of \cref{Emerton}.
\begin{lemma}\label{quitte-lemma}
    Let $A$ be a ring and $a\in A$. Let $B$ be a ring extension of $A$ such that $B_a$ is integral over $A_a$. Then $a((\Jac_B0)\cap A)\subseteq\Jac_A0$.
\end{lemma}
\begin{proof}
    Let $a'\in(\Jac_B0)\cap A$. For all $x\in A$, we have $1-a'x\in B^\times$. Hence $1-a'x\in B_a^\times\cap A_a\subseteq A_a^\times$ by \cref{invertibility-integral-extension}.
    Hence $a\in\Nil_A(1-a'x)\subseteq\Jac_A(1-a'x)$. Thus, $aa'\in\Jac_A0$ by \cref{radical-properties}-(2) with $(A,U,x,y)=(A,0,a,a')$.
\end{proof}
The following lemma is a constructive counterpart of the key lemma given by Emerton \cite[Lemma 6]{Eme}.
Emerton's lemma assumes that $A$ and $B$ are integral domains.
As explained in \cref{dynamical}, we need to remove this assumption to eliminate the use of prime ideals and maximal ideals and to obtain a constructive proof of the general Nullstellensatz.
Generalizing Emerton's key lemma is the most crucial process in the proof of our main theorem.
\begin{lemma}\label{Emerton}
    Let $A$ be a Jacobson ring, $a\in A$, and $B$ be an $A$-algebra such that $B_a$ is integral over $A_a$.
    Then $a\Jac_BJ\subseteq\Nil_BJ$ holds for all ideals $J$ of $B$.
\end{lemma}
\begin{proof}
    Let $B':=B/J$ and $\varphi:A\to B'$ be the canonical homomorphism.
    Let $A':=A/{\ker{\varphi}}$. The ring $B'$ is a ring extension of $A'$.
    Let $f\in\Jac_BJ$.
    Since $B'_a$ is integral over $A'_a$, there exist $n,l\ge0$ and $a_0,\ldots,a_{n-1}\in A'$ such that $a^lf^n+a_{n-1}f^{n-1}+\cdots+a_0=_{B'}0$.
    For $k\in\{0,\ldots,n\}$, let $g_k:=a^lf^{k}+a_{n-1}f^{k-1}+\cdots+a_{n-k}$, $J_k:=\gen{g_k,\ldots,g_{n-1}}_{B'}$, and $(A_k,B_k):=(A'/(J_k\cap A'),B'/J_k)$.
    Note that $B_k$ is a ring extension of $A_k$ such that $(B_k)_a$ is integral over $(A_k)_a$.
    We prove that $af\in\Nil_{B_k}0$ for all $k$ by induction.
    \begin{enumerate}
        \item Since $a^l=_{A_0}0$, we have $af\in\Nil_{B_0}0$.
        \item
            Let $k\ge1$. We have $a_{n-k}\in\Jac_{B_k}0$ by $f,g_k\in\Jac_{B_k}0$.
            Hence $aa_{n-k}\in\Jac_{A_k}0\subseteq\Nil_{A_k}0\subseteq\Nil_{B_k}0$ by \cref{quitte-lemma} and the Jacobsonness of $A_k$. Hence $afg_{k-1}\in\Nil_{B_k}0$ by $a_{n-k}=g_k-fg_{k-1}=_{B_k}-fg_{k-1}$.
                We have $af\in\Nil_{B_{k-1}}0$ by the inductive hypothesis.
                Hence $af\in\Nil_{B_k}g_{k-1}$ by \cref{radical-projection}. Thus, $af\in\Nil_{B_k}0$ by \cref{radical-properties}-(1) with $(A,U,x,y)=(B_k,0,af,g_{k-1})$.
    \end{enumerate}
    Thus, $af\in\Nil_{B_n}0=\Nil_{B'}0$, and hence $af\in\Nil_BJ$.
\end{proof}
The above key lemma implies the following two corollaries, which are useful for constructing examples of Jacobson rings.
\begin{corollary}[pointed out by Quitté {\cite{Qui24}}]
    If $A$ is a Jacobson ring and $a\in A$, then $A_a$ is Jacobson.
\end{corollary}
\begin{proof}
    Let $B=A_a$ in \cref{Emerton}.
\end{proof}
\begin{corollary}
    If $B$ is an algebra integral over a Jacobson ring $A$, then $B$ is Jacobson.
\end{corollary}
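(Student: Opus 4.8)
The plan is to reduce the statement to the previously established \cref{Emerton} with the special choice $a = 1$. By \cref{Jacobson-iff-Jac-sub-Nil-for-all-quotients}, proving that $B$ is Jacobson amounts to checking $\Jac_{B/J}0 \subseteq \Nil_{B/J}0$ for every ideal $J$ of $B$; so I would fix such a $J$ and regard $B/J$ as an $A$-algebra via the composite $A \to B \to B/J$.

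The key point is that $B/J$ is again integral over $A$: every element of $B/J$ is the image of some $b \in B$, and a monic equation satisfied by $b$ over the image of $A$ pushes forward to one satisfied by that image. Consequently, taking $a = 1$, the localized homomorphism $A_1 \to (B/J)_1$---which is nothing but the integral map $A \to B/J$---meets the hypothesis of \cref{Emerton}. Applying that lemma to the $A$-algebra $B/J$, every $f \in \Jac_{B/J}0$ gives $f = 1 \cdot f \in \Nil_{B/J}0$, which is exactly the inclusion we wanted.

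I do not expect a serious obstacle, since all the substantive work has already been done in \cref{Emerton} and \cref{lemma-Emerton}. The only two points needing a little care---and the closest thing to a ``hard part''---are (i) routing the quantifier over all ideals in the definition of a Jacobson ring through \cref{Jacobson-iff-Jac-sub-Nil-for-all-quotients}, so that it suffices to treat $\Jac 0 \subseteq \Nil 0$ in each quotient, and (ii) observing that integrality of $B$ over $A$ descends to every quotient $B/J$, which is what makes \cref{Emerton} applicable there.
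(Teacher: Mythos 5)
Your argument is correct and is essentially the paper's own proof, which also combines \cref{Jacobson-iff-Jac-sub-Nil-for-all-quotients} with \cref{Emerton} at $a=1$; you simply spell out the routine step that integrality passes to each quotient $B/J$.
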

\begin{proof}
    Let $a=1$ in \cref{Emerton}.
\end{proof}
We are now ready to prove our main theorem.
Quitté \cite{Qui24} has simplified our original proof by removing redundant inductive hypotheses.
\begin{theorem}[The General Nullstellensatz]\label{general-nullstellensatz}
    If $A$ is a Jacobson ring, then so is $A[X]$.
\end{theorem}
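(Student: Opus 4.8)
The plan is to reduce to a statement about a single quotient and then induct on the degree of a polynomial representative. By \cref{Jacobson-iff-Jac-sub-Nil-for-all-quotients} it suffices to prove $\Jac_{A[X]/J}0\subseteq\Nil_{A[X]/J}0$ for every ideal $J\subseteq A[X]$. Write $C:=A[X]/J$, let $\pi\colon A[X]\to C$ be the projection and $x:=\pi(X)$; replacing $A$ by its image in $C$, which is again Jacobson by \cref{quotient-of-Jacobson-is-Jacobson}, I may assume $A\subseteq C=A[x]$. Fix $f\in\Jac_C0$ and a representative $F=a_dX^d+\cdots+a_0\in A[X]$ with $f=\pi(F)$; by \cref{radical-projection} this is the same as $F\in\Jac_{A[X]}J$, and the goal is $F\in\Nil_{A[X]}J$, which I prove by induction on $d=\deg F$.

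Two tools will drive the induction. First, if $c\in A$ is the leading coefficient of an element of $J$, then inverting $c$ turns that element into a unit times a monic polynomial, so $C_c=A_c[x]$ is integral over $A_c$; \cref{Emerton} (applied with the Jacobson ring $A$, the element $c$, and the $A$-algebra $C$) then yields $\pi(c)\,f\in\Nil_C0$. Since $\{c\in A:\pi(c)f\in\Nil_C0\}$ is an ideal, this holds for every $c$ in the ideal $L$ generated by all leading coefficients of elements of $J$. Second, for every $c\in A$ the quotient $C/(x-c)$ equals $A[X]/(J,X-c)$, which is a quotient of $A$, hence Jacobson; therefore the image of $f$ in $C/(x-c)$ lies in its nilradical, and since $f=\pi(F(c))+(x-c)\pi(\tilde F)$ for a suitable $\tilde F$, a short binomial computation promotes this to $f\in\Nil_C(x-c)$.

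For the inductive step I apply \cref{radical-properties}-(1) inside $C$ with $U=0$ and auxiliary element $\pi(a_d)$, so that it suffices to show (ii) $f\in\Nil_C(\pi(a_d))$ and (i) $\pi(a_d)\,f\in\Nil_C0$. Part (ii) is the degree drop: modulo $a_d$ the representative $F$ becomes $F-a_dX^d$, of degree $<d$, still in $\Jac_{A[X]}(J+a_dA[X])$ (as $\Jac$ is monotone in and absorbs its ideal), so the inductive hypothesis gives $F-a_dX^d\in\Nil_{A[X]}(J+a_dA[X])$ and hence (ii) by \cref{radical-projection}. Part (i) is the heart of the matter. When $a_d\in L$ it is immediate from the first tool. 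The remaining case---where $F$ is, up to a unit, monic but $J$ contains no monic polynomial---is the genuine obstacle: here the relation $f=a_dx^d+\cdots+a_0$ exhibits $C=A[x]$ as integral over $A[f]$ (via \cref{integrality-monogenic-algebra}), so by \cref{invertibility-integral-extension} the hypothesis $f\in\Jac_C0$ upgrades to $f\in\Jac_{A[f]}0$; writing $A[f]=A[T]/K$ with $T\mapsto f$, the required $f\in\Nil_{A[f]}0$ is exactly the implication ``$T\in\Jac_{A[T]}K\Rightarrow T\in\Nil_{A[T]}K$'', which is the inductive hypothesis for the degree-one polynomial $T$---so \cref{Emerton} is only ever invoked on rings already known to be Jacobson, and the induction closes. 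The base case $d\le1$ must be treated separately, using \cref{invertible-polynomial} (which, via the substitution $g=X$ in the definition of the Jacobson radical, gives $\Jac_{R[X]}0=\Nil_{R[X]}0$ for every ring $R$) together with the specialization remark above. I expect the careful bookkeeping needed to keep every appeal to \cref{Emerton} non-circular, and to dispatch the low-degree cases, to be the delicate part of the proof.
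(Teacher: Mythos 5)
Your scaffolding (reduce to $C=A[X]/J$, use \cref{Emerton} whenever $J$ contains a polynomial with leading coefficient $c$ to get $\pi(c)f\in\Nil_C0$, then split off a leading coefficient via \cref{radical-properties}-(1) and \cref{radical-projection}) is sound, and your part (ii) works. The gap is part (i). Your two cases --- $a_d\in L$, or $F$ monic up to a unit --- are not exhaustive: the leading coefficient $a_d$ of your chosen representative of $f$ need have no relation to $J$ at all, and it is typically a non-unit outside $L$ (take $A=\Z$, $J=\gen{2X-1}$, $F=3X+1$: then $L=2\Z$ and $a_1=3$ is neither in $L$ nor a unit, so neither branch applies). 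The case distinction also invokes excluded middle on membership in $L$, which is unavailable in the constructive setting of the paper. Moreover, in your second branch the integrality claim is wrong in general: $f=a_dx^d+\cdots+a_0$ makes $x$ integral over $A[f]$ only after inverting $a_d$, so the appeal to \cref{invertibility-integral-extension} genuinely needs $a_d$ to be a unit, which you cannot arrange. Finally, the deferred ``base case $d\le1$'' is not a boundary technicality: already $Q(0)$ asserts that for \emph{every} ideal $J$ and every $a_0\in A$ with $\pi(a_0)\in\Jac_{A[X]/J}0$ one has $\pi(a_0)\in\Nil_{A[X]/J}0$, which is essentially the whole theorem restricted to constants; \cref{invertible-polynomial} only covers the situation where $1-fX$ is invertible in $A[X]$ itself (morally $J=0$), and specialization $x\mapsto c$ does not supply the rest. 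Note also that your induction makes no progress at $d=0$, where (i) is literally the statement to be proved.

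Compare with the paper: there the induction is not on the degree of a representative of $f$ but on the degree of an auxiliary $g\in J$ equipped with the certificate $1\in\gen{g,1-fX}$, obtained by instantiating the definition of $\Jac$ at $b=X$. That certificate is exactly what repairs both failures above: the coefficient that gets inverted (to apply \cref{Emerton}) and then quotiented out (to drop the degree) is by construction a leading coefficient of an element of $J$, so only your ``$a_d\in L$'' branch is ever needed; and when the induction bottoms out at $g=0$ the certificate degenerates to $1\in\gen{1-fX}$, which is precisely where \cref{invertible-polynomial} closes the argument. If you want to salvage your outline, replace the induction on $\deg F$ by an induction on the degree of such a $g\in J$, carrying the condition $1\in\gen{g,1-fX}$ through the quotient by its leading coefficient; this is exactly \cref{lem:gen-null}.
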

\begin{proof}
    Let $J$ be an ideal of $A[X]$ and $f\in\Jac_{A[X]}J$.
    Since $1\in\gen{J,1-fX}_{A[X]}$, there exists $g\in J$ such that $1\in\gen{g,1-fX}_{A[X]}$.
    There exist $n\ge0$ and $a_0,\ldots,a_n\in A$ such that $g=a_nX^n+\cdots+a_0$.

    Let $C_k:=A[X]/\gen{J,a_{k+1},\ldots,a_n}$ for $k\in\{-1,\ldots,n\}$.
    We prove that $f\in\Nil_{C_k}0$ for all $k$ by induction.
    \begin{enumerate}
        \item Let $A':=A/\gen{a_0,\ldots,a_n}$. We have $f\in\Nil_{A'[X]}0$ by $1\in\gen{1-fX}_{A'[X]}$ and \cref{invertible-polynomial}. Hence $f\in\Nil_{C_{-1}}0$.
        \item Let $k\ge0$. 
        Then $X\in (C_k)_{a_k}$ is integral over $A_{a_k}$ since $g=_{(C_k)_{a_k}}0$.
        Hence, $(C_k)_{a_k}$ is integral over $A_{a_k}$ by \cref{integrality-monogenic-algebra}. Since $f\in\Jac_{C_k}0$, we have $a_kf\in\Nil_{C_k}0$ by \cref{Emerton}.
        We have $f\in\Nil_{C_{k-1}}0$ by the inductive hypothesis. Hence $f\in\Nil_{C_{k}}a_k$. Thus, $f\in\Nil_{C_k}0$ by \cref{radical-properties}-(1) with $(A,U,x,y)=(C_k,0,f,a_k)$.
    \end{enumerate}
    Thus, $f\in\Nil_{C_n}0=\Nil_{A[X]/J}0$, and hence $f\in\Nil_{A[X]}J$.
\end{proof}
The main theorem and \cref{quotient-Jacobson} together imply the following corollary.
\begin{corollary}
    Every finitely generated algebra over a Jacobson ring is Jacobson.
\end{corollary}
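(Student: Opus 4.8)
The plan is to reduce the statement to the two results already in hand: the General Nullstellensatz (\cref{general-nullstellensatz}) and the fact that a quotient of a Jacobson ring is Jacobson (\cref{quotient-of-Jacobson-is-Jacobson}). A finitely generated algebra $B$ over a Jacobson ring $A$ is, by definition, a quotient $A[X_1,\ldots,X_n]/I$ of a polynomial ring in finitely many variables by an ideal $I$. So it suffices to prove that $A[X_1,\ldots,X_n]$ is Jacobson whenever $A$ is, and then apply \cref{quotient-of-Jacobson-is-Jacobson} with the ideal $I$ to conclude that $B$ is Jacobson.

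For the polynomial ring claim I would induct on the number of variables $n$. The base case $n=0$ is just the hypothesis that $A$ is Jacobson. For the inductive step, I would use the identification $A[X_1,\ldots,X_n]\cong (A[X_1,\ldots,X_{n-1}])[X_n]$: by the inductive hypothesis $A[X_1,\ldots,X_{n-1}]$ is Jacobson, and then \cref{general-nullstellensatz}, applied to this ring with the single adjoined variable $X_n$, shows that $A[X_1,\ldots,X_n]$ is Jacobson.

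I do not expect a genuine obstacle here; the proof is a direct chaining of the two cited results. The only points requiring minor care are the bookkeeping of the iterated polynomial extension in the induction, and — depending on the chosen constructive foundations — spelling out exactly what "finitely generated algebra" unfolds to, namely a surjection $A[X_1,\ldots,X_n]\twoheadrightarrow B$ for some $n$, so that \cref{quotient-of-Jacobson-is-Jacobson} applies to the kernel.
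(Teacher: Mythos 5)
Your proof is correct and is exactly the argument the paper intends (the paper leaves this corollary without an explicit proof, as an immediate consequence): induct on the number of variables using \cref{general-nullstellensatz}, then pass to the quotient via \cref{quotient-of-Jacobson-is-Jacobson}. No gaps.
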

We obtain a solution to two problems \cite[{}1.1, 1.2]{Lom23} in Lombardi's list by \cref{zero-dim-implies-Jacobson}, \cref{Z-is-Jacobson}, and the above corollary.
\begin{corollary}
    Let $A$ be a zero-dimensional ring or the ring of integers $\Z$.
    Then every finitely generated algebra over $A$ is Jacobson.
\end{corollary}
\section{Finitely Jacobson rings}
In this section, we develop the theory of finitely Jacobson rings and prove a variant of the general Nullstellensatz.

The following definition of finitely Jacobson rings has been proposed by Madden, Delzell, and Ighedo \cite[p.~9]{MDI22}.
Lombardi \cite[Section 1]{Lom23} has also introduced the same notion under the name of weakly Jacobson rings.
\begin{definition}
    We call a ring $A$ \emph{finitely Jacobson} if every finitely generated ideal $I$ of $A$ satisfies $\Jac I\subseteq\Nil I$.
\end{definition}
Every Jacobson ring is finitely Jacobson.
There is a finitely Jacobson ring that is not Jacobson:
\begin{proposition}
    Let $A$ be the polynomial ring $\Q[X_k:k\in\N]$ in countably infinitely many variables over $\Q$.
    Then $A$ is finitely Jacobson but not Jacobson.
\end{proposition}
\begin{proof}
    \begin{enumerate}
        \item We first prove that $A$ is finitely Jacobson. Let $f_1,\ldots,f_n\in A$ and $I:=\gen{f_1,\ldots,f_n}$. Then, there exists $m\in\N$ such that $f_1,\ldots,f_n\in A_m$, where $A_m:=\Q[X_k:k\in\N-\{m\}]$.
        Since $A=A_m[X_m]$ and $A/I=(A_m/\gen{f_1,\ldots,f_n})[X_m]$, we have $\Jac_{A/I}0\subseteq\Nil_{A/I}0$ by \cref{polynomial-jac-nil}.
        Hence $\Jac_A I=\Nil_A I$ by \cref{radical-projection}.
        \item 
        We next prove that $A$ is not Jacobson.
        Let $S:=1+X\Q[X]$ and $B:=S^{-1}(\Q[X])\subseteq\Q(X)$.
        We have $X\in\Jac_B0$, since for every $f\in\Q[X]$ and $g\in S$, we have $g-Xf\in S$ and $1-X(f/g)=(g-Xf)/g\in B^\times$.
        We also have $X\notin\Nil_B0$ since $\Nil_B0=0$.
        Hence, $B$ is not Jacobson.
        Since $B$ is a countable $\Q$-algebra, there exists a surjective ring homomorphism from $A$ to $B$. Hence $A$ is not Jacobson by \cref{quotient-Jacobson}.\qedhere
    \end{enumerate}
\end{proof}
The following proposition corresponds to \cref{quotient-Jacobson}.
\begin{proposition}\label{fin-quotient-Jacobson}
    If $I$ is a finitely generated ideal of a finitely Jacobson ring $A$, then $A/I$ is finitely Jacobson.
\end{proposition}
\begin{proof}
    Let $J$ be a finitely generated ideal of $A/I$.
    There exist $a_1,\ldots,a_m,b_1,\ldots,b_n\in A$ such that $I=\gen{a_1,\ldots,a_m}_A$ and $J=\gen{b_1,\ldots,b_n}_{A/I}$.
    Let $\pi:A\to A/I$ denote the canonical projection.
    By \cref{radical-projection}, we have $\Jac J=\pi(\Jac_A \gen{a_1,\ldots,a_m,b_1,\ldots,b_n})\subseteq\pi(\Nil_A \gen{a_1,\ldots,a_m,b_1,\ldots,b_n})=\Nil J$.
\end{proof}
The following proposition corresponds to \cref{Kdim1-Jacobson} and \cref{simplify-Kdim1-Jacobson}.
Note that we do not need to assume the law of excluded middle to obtain the following result:
\begin{proposition}\label{fin-Kdim1-Jacobson}
    If $A$ is a ring satisfying all of the following conditions, then $A$ is Jacobson:
    \begin{enumerate}
        \item Every element of $A$ is nilpotent or regular.
        \item The Krull dimension of $A$ is at most 1.
        \item $\Jac0\subseteq\Nil0$.
    \end{enumerate}
\end{proposition}
\begin{proof}
    Using a similar argument as in the proof of \cref{simplify-Kdim1-Jacobson}, we first prove that for any finitely generated ideals $I=\gen{a_1,\ldots,a_n}$ of $A$, if $\Jac I$ contains a regular element, then $I$ also contains a regular element.
    \begin{enumerate}
        \item If there exists $i\in\{1,\ldots,n\}$ such that $a_i$ is regular, then $I$ contains a regular element.
        \item If $a_1,\ldots,a_n$ are all nilpotent, then $I\subseteq \Nil0\subseteq\Jac0$. Hence $\Jac I\subseteq \Jac(\Jac0)\subseteq\Jac0\subseteq\Nil0$ by \cref{idempotence}.
        Since $\Jac I$ contains a regular element, $A$ has a regular and nilpotent element. Hence $0\in I$ is regular.
    \end{enumerate}
    The rest of the proof is similar to the proof of \cref{Kdim1-Jacobson}.
\end{proof}
We introduce the notion of good algebras to prove a variant of the general Nullstellensatz.
\begin{definition}
    Let $A$ be a ring. We call an $A$-algebra $(B,\varphi:A\to B)$ \emph{good} if for all finitely generated ideal $J$ of $B$, the ideal $\varphi^{-1}(J)\subseteq A$ is finitely generated.
\end{definition}
\begin{proposition}\label{fin-quotient-good}
    Let $(B,\varphi:A\to B)$ be a good $A$-algebra.
    \begin{enumerate}
        \item Let $A':=A/{\ker\varphi}$. Then $B$ is a good $A'$-algebra.
        \item Let $J$ be a finitely generated ideal of $B$. Then $B/J$ is a good $A$-algebra.
    \end{enumerate}
\end{proposition}
\begin{proof}
    \begin{enumerate}
        \item Let $\pi:A\to A'$ be the canonical projection and $\overline\varphi:A'\to B$ be the canonical homomorphism.
        If $J$ is a finitely generated ideal of $B$, then ${\overline\varphi}^{-1}(J)=\pi(\varphi^{-1}(J))$ is finitely generated.
        \item Since $B/J$ is a good $B$-algebra, it is also a good $A$-algebra.\qedhere
    \end{enumerate}
\end{proof}
The following lemma corresponds to \cref{Emerton}.
\begin{lemma}\label{fin-Emerton}
    Let $A$ be a finitely Jacobson ring and $a\in A$. Let $B$ be a good $A$-algebra such that $B_a$ is integral over $A_a$.
    Then $a\Jac_BJ\subseteq\Nil_BJ$ holds for all finitely generated ideals $J$ of $B$.
\end{lemma}
\begin{proof}
    By noting the following point, we can prove this similarly to the proof of \cref{Emerton}:
    \begin{itemize}
        \item Since $J$ is finitely generated, the ring $B':=B/J$ in the proof is a good $A'$-algebra by \cref{fin-quotient-good}.
        Hence $J_k\cap A'$ is finitely generated, and $A_k$ is finitely Jacobson by \cref{fin-quotient-Jacobson}.\qedhere
    \end{itemize}
\end{proof}
Following \cite{HP89}, we define the contraction property of a ring.
\begin{definition}
    A ring $A$ satisfies the \emph{contraction property (CP)} if $A[X]$ is a good $A$-algebra.
\end{definition}
\begin{theorem}\label{fin-general-nullstellensatz}
    Let $A$ be a finitely Jacobson ring satisfying CP. Then $A[X]$ is finitely Jacobson.
\end{theorem}
\begin{proof}
    By noting the following point, we can prove this similarly to the proof of \cref{general-nullstellensatz}.
    \begin{itemize}
        \item Let $J$ be a finitely generated ideal of $A[X]$. Then, $C_k:=A[X]/\gen{J,a_{k+1}.\ldots,a_n}$ in the proof is a good $A$-algebra by \cref{fin-quotient-good}.
        Hence, we can use \cref{fin-Emerton} instead of \cref{Emerton}.
        \qedhere
    \end{itemize}
\end{proof}
The author does not know whether there exists a finitely Jacobson ring $A$ such that $A[X]$ is not finitely Jacobson.

There are several known classes of rings satisfying CP.
For example, every coherent Noetherian ring (in the sense of Richman and Seidenberg \cite{Ric74,Sei74}) satisfies CP (\cite[Corollary 1 of Lemma 4]{Sei74}, \cite[Theorem VIII-1.2]{MRR88}).
Hence, we have the following corollary:
\begin{corollary}
    If $A$ is a coherent Noetherian finitely Jacobson ring, then $A[X_1,\ldots,X_n]$ is finitely Jacobson.
\end{corollary}
\begin{proof}
    By Hilbert's basis theorem (\cite[Theorem 1]{Sei74}, \cite[Theorem VIII-1.5]{MRR88}), the ring $A[X_1,\ldots,X_k]$ is coherent Noetherian for every $k\in\{0,\ldots,n-1\}$. Hence, the assertion follows from \cref{fin-general-nullstellensatz}.
\end{proof}
One-dimensional Pr\"ufer domains also satisfy CP. This is classically proved in \cite[Theorem 2.5]{HP89}, and as mentioned in \cite[Example 4]{PY14}, it can be proved constructively using a result for valuation domains \cite{LSY12} and dynamical Gröbner bases \cite{KY10,Yen06}.
The constructive proof shows that they are furthermore $1$-Gröbner rings.
It is also known that they are Gröbner rings \cite[Corollary 6]{Yen14}.

\section*{Acknowledgments}
The author would like to express his deepest gratitude to his supervisor, Ryu Hasegawa, for his support.
The author would like to thank Claude Quitté, Henri Lombardi, Daniel Misselbeck-Wessel, Ryotaro Iwane, and Yuto Ikeda for their helpful advice.
The author would also like to thank Ralph Willox for his writing advice.
Finally, the author would like to thank the anonymous referee for many useful comments.

This research was supported by Forefront Physics and Mathematics Program to Drive Transformation (FoPM), a World-leading Innovative Graduate Study (WINGS) Program, the University of Tokyo.

\end{document}